\providecommand{\U}[1]{\protect\rule{.1in}{.1in}}
\newtheorem{theorem}{Theorem}
\newtheorem{definition}[theorem]{Definition}
\newtheorem{lemma}[theorem]{Lemma}
\newtheorem{proposition}[theorem]{Proposition}
\newtheorem{remark}[theorem]{Remark}
\newenvironment{proof}[1][Proof]{\noindent\textbf{#1.} }{\ \rule{0.5em}{0.5em}}
\begin{document}

\title{\textbf{Blowup of Regular} \textbf{Solutions for the Relativistic
Euler-Poisson Equations}}
\author{\textsc{Wai Hong Chan\thanks{E-mail address: waihchan@ied.edu.hk}, Sen
Wong\thanks{E-mail address: senwongsenwong@yahoo.com.hk} and }M\textsc{anwai
Yuen\thanks{Corresponding Author, E-mail address: nevetsyuen@hotmail.com }}\\\textit{Department of Mathematics and Information Technology,}\\\textit{The Hong Kong Institute of Education,}\\\textit{10 Lo Ping Road, Tai Po, New Territories, Hong Kong}}
\date{Revised 05-Sept-2015}
\maketitle

\begin{abstract}
In this paper, we study the blowup phenomena for the regular solutions of the
isentropic relativistic Euler-Poisson equations with a vacuum state in
spherical symmetry. Using a general family of testing functions, we obtain new
blowup conditions for the relativistic Euler-Poisson equations. We also show
that the proposed blowup conditions are valid regardless of the speed
requirement, which was one of the key constraints stated in "Y. Geng,
\textit{Singularity Formation for Relativistic Euler and Euler-Poisson
Equations with Repulsive Force}, Commun. Pure Appl. Anal., \textbf{14} (2015), 549--564.".

MSC: 35B44, 35Q75, 83C10, 35L67, 35B30

\ 

Key Words: Relativistic Euler-Poisson Equations, Integration Method, Blowup,
Initial Value Problem, Vacuum, Radial Symmetry

\end{abstract}

\section{Introduction}

The isentropic relativistic Euler-Poisson equations \cite{1} are expressed as
follows:%
\begin{equation}
\left\{
\begin{matrix}
\displaystyle\partial_{t}\left(  \frac{n}{\sqrt{1-|v|^{2}/c^{2}}}\right)
+{\nabla}\cdot\left(  \frac{nv}{\sqrt{1-|v|^{2}/c^{2}}}\right)  =0\text{,}\\
\\
\displaystyle\partial_{t}\left(  \frac{p/c^{2}+\rho}{1-|v|^{2}/c^{2}}v\right)
+{\nabla}\cdot\left(  \frac{p/c^{2}+\rho}{1-|v|^{2}/c^{2}}v\otimes v\right)
+{\nabla}p=\frac{4\pi n{\nabla}\phi}{\sqrt{1-|v|^{2}/c^{2}}}\text{,}\\
\\
\displaystyle\Delta\phi=\frac{4\pi n}{\sqrt{1-|v|^{2}/c^{2}}}\text{,}%
\end{matrix}
\right.  \label{e1}%
\end{equation}
where $n$ is defined by%
\begin{equation}
\rho=n\left(  1+e/c^{2}\right)  \text{,} \label{ee2}%
\end{equation}
which satisfies%
\begin{equation}
\frac{\mathrm{d}n}{nc^{2}}=\frac{\mathrm{d}\rho}{p+\rho c^{2}}\text{.}
\label{ee3}%
\end{equation}
The unknowns and constants in the above equation are defined as
follows:\newline$\rho:[0,\infty)\times\mathbf{R}^{3}\rightarrow\lbrack
0,\infty)$ and $n:[0,\infty)\times\mathbf{R}^{3}\rightarrow\lbrack0,\infty)$
denote the proper mass-energy density and the charge density, respectively;
$c$ is the speed of light; $v:[0,\infty)\times\mathbf{R}^{3}\rightarrow
\mathbf{R}^{3}$ is the velocity of an electro-fluid; $-\phi:[0,\infty
)\times\mathbf{R}^{3}\rightarrow\mathbf{R}$ is the electrostatic potential in
the inertial frame; and $p=p(\rho)$ is the pressure function of the
electro-fluid in a proper frame. The equation of state $p$ follows the
$\gamma$-law:%
\begin{equation}
p=\rho^{\gamma}\text{,} \label{ee4}%
\end{equation}
where $\gamma>1$ is the adiabatic index. Lastly, the constant $e\geq0$ in
$(\ref{ee2})$ is the specific internal energy.

Relativistic electrodynamics includes the study of the interaction between
relativistic charged particles and electromagnetic fields when the particles
are moving at a speed comparable to the speed of light in a vacuum. At such a
high speed, the motion of the charged particles no longer obeys the Newtonian
equations, so relativistic equations of particles must be applied. Under a
field with a much stronger electric than magnetic effect, such as those in
supernova explosions, gravitational collapse, and the formation and expansion
of black holes and neutron stars, the motion of an isentropic relativistic
electro-fluid can be described by the Euler-Poisson equations $(\ref{ee2})$
when the charged particles are moving very fast.

To understand the mathematical nature of relativistic fluid dynamics, we first
review a related and previously developed relativistic model, namely, the
relativistic Euler equations. Makino and Ukai $\cite{9,10}$ and Lefloch and
Ukai $\cite{11}$ established the local existence of classical solutions to the
relativistic system using the theory of a quasi-linear symmetric hyperbolic
system. More precisely, the critical part of Makino and Ukai's proof was based
on the existence of a strictly convex entropy for the non-vacuum case; the
critical part of Lefloch and Ukai's proof relied on the generalized Riemann
invariants and normalized velocity for the vacuum case. Geng and Li
$\cite{12}$ extended these results to the isentropic system. For the
non-isentropic system, Guo $\cite{15}$ proved the blowup result for smooth
solutions using the averaged quantities method developed by Sideris
$\cite{13,14}$. Moreover, Pan and Smoller $\cite{16}$ applied the classical
energy method to show the singularity formation of smooth solutions.

Due to the complexity of the structures of system $(\ref{e1})$, research on
multi-dimensional relativistic Euler-Poisson equations is still at an early
stage. In 2013, Mai, Li and Zhang \cite{2} gave the first well-posed result
for the steady-state relativistic Euler-Poisson equations with relaxation. For
system $(\ref{e1})$ in the one dimensional case, Geng and Wang $\cite{4}$
obtained the global existence of a smooth solution with some monotonic
conditions on the initial data. The importance of system $(\ref{e1})$ is that
the non-relativistic Euler-Poisson equations are the Newtonian limit of system
$(\ref{e1})$. Readers may refer to $\cite{Perthame,7,8,W}$ for the blowup
results of the non-relativistic Euler-Poisson equations.

In this paper, we consider the spherical symmetric solutions, namely,%
\begin{equation}
n=n(t,r)\text{, \ \ \ \ \ }\rho=\rho(t,r)\text{, \ \ \ \ \ }v=\frac{x}%
{r}v(t,r)\text{,}%
\end{equation}
where $r=|x|$ is the radius of the spatial variables $x\in\mathbf{R}^{3}%
$.\newline Then, system $(\ref{e1})$ is transformed into%
\begin{equation}
\left\{
\begin{matrix}
\displaystyle\partial_{t}\left(  \frac{n}{\sqrt{1-v^{2}/c^{2}}}\right)
+\partial_{r}\left(  \frac{n}{\sqrt{1-v^{2}/c^{2}}}v\right)  +\frac{2}{r}%
\frac{n}{\sqrt{1-v^{2}/c^{2}}}v=0\text{,}\\
\\
\displaystyle\partial_{t}\left(  \frac{p/c^{2}+\rho}{1-v^{2}/c^{2}}v\right)
+\partial_{r}\left(  \frac{p/c^{2}+\rho}{1-v^{2}/c^{2}}v^{2}\right)
+\partial_{r}p+\frac{2}{r}\frac{p/c^{2}+\rho}{1-v^{2}/c^{2}}v^{2}=\frac{4\pi
n\phi_{r}}{\sqrt{1-v^{2}/c^{2}}}\text{,}\\
\\
\displaystyle\phi_{rr}=-\frac{8\pi}{r^{2}}\int_{0}^{r}\frac{n}{\sqrt
{1-v^{2}/c^{2}}}s^{2}ds+\frac{4\pi n}{\sqrt{1-v^{2}/c^{2}}}\text{.}%
\end{matrix}
\right.  \label{e7}%
\end{equation}
Note that we have%
\begin{equation}
\phi_{r}=\frac{4\pi}{r^{2}}\int_{0}^{r}\frac{n}{\sqrt{1-v^{2}/c^{2}}}s^{2}ds
\label{ee8}%
\end{equation}
as equation $(\ref{e7})_{3}$ can be solved using Green's function.

In $\cite{7}$, Yuen obtained a blowup result for the compressible Euler and
Euler-Poisson equations with repulsive forces using the integration method.
Recently, Geng $\cite{1}$ used the integration method described in $\cite{7}$
to obtain a blowup result for the regular solutions of relativistic Euler and
Euler-Poisson equations. Note that in $\cite{5,6}$, the authors generalized
the testing function in the integration method to any strictly increasing
function. By combining the method in $\cite{5,6}$ with the results of Geng
$\cite{1}$, we obtain the following theorem, which is our main contribution.
Moreover, we remove the condition that $|v|\geq c/2$ in $\cite{1}$.

\begin{theorem}
\label{THM1}For any strictly increasing $C^{1}$ function $f(r)$ vanishing at
$r=0$, the regular solutions of system $(\ref{e7})$ with initial data
$(\ref{8})$ and%
\begin{equation}
{p}^{\prime}(\rho)<ac^{2}\text{, \ \ \ \ \ for some }a\in(0,1)
\end{equation}
blow up on or before the finite time%
\begin{equation}
T=\displaystyle\frac{2\left(  \int_{0}^{R}f(r)v_{0}dr\right)  \left(  \int
_{0}^{R}\frac{f^{2}(r)}{{f}^{\prime}(r)}dr\right)  }{\left(  \int_{0}%
^{R}f(r)v_{0}dr\right)  ^{2}-2C\left(  \int_{0}^{R}\frac{f^{2}(r)}{{f}%
^{\prime}(r)}dr\right)  \left(  \int_{0}^{R}f(r)dr\right)  }>0\text{,}%
\end{equation}
if%
\begin{equation}
\int_{0}^{R}f(r)v_{0}dr>\sqrt{2C\left(  \int_{0}^{R}\frac{f^{2}(r)}%
{{f}^{\prime}(r)}dr\right)  \left(  \int_{0}^{R}f(r)dr\right)  }\text{,}%
\end{equation}
where%
\begin{equation}
C=\frac{c^{2}(\gamma+a-\gamma a+9)}{2(\gamma-1)(1-a)^{2}}>0\text{.}%
\end{equation}

\end{theorem}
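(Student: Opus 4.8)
The plan is to reduce the problem to a single Riccati-type inequality for the weighted velocity functional $H(t)=\int_{0}^{R}f(r)v(t,r)\,dr$, for which $H(0)=\int_{0}^{R}f(r)v_{0}\,dr$ is exactly the quantity appearing in the hypothesis. First I would rewrite the momentum equation $(\ref{e7})_2$ in non-conservative form. Expanding its time and space fluxes and subtracting $v$ times the continuity equation $(\ref{e7})_1$ converts it, with $\Gamma=(1-v^{2}/c^{2})^{-1}$, $M=(p/c^{2}+\rho)\Gamma$ and $N=n\Gamma^{1/2}$, into a relativistic transport equation $M(v_{t}+vv_{r})=-p_{r}+4\pi n\Gamma^{1/2}\phi_{r}-v\,\mathcal E$, where $\mathcal E=N\,D_{t}(M/N)$ (with $D_{t}=\partial_{t}+v\partial_{r}$ the material derivative) is a purely relativistic defect measuring the failure of $M$ to be transported like the conserved density $N$.

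Next I would divide by $M$, multiply by $f$, and integrate over $[0,R]$ to obtain a formula for $\dot H=\int_{0}^{R}f\,v_{t}\,dr$. The convective part integrates by parts, using $f(0)=0$ and the vacuum condition at $r=R$, to $-\int_{0}^{R}f\,v\,v_{r}\,dr=\tfrac12\int_{0}^{R}f'(r)\,v^{2}\,dr$, and the Cauchy--Schwarz inequality in the form
\[
\Big(\int_{0}^{R}f v\,dr\Big)^{2}\le\Big(\int_{0}^{R}\tfrac{f^{2}}{f'}\,dr\Big)\Big(\int_{0}^{R}f'v^{2}\,dr\Big)
\]
bounds it below by $H^{2}/(2B)$ with $B=\int_{0}^{R}\frac{f^{2}}{f'}\,dr$; this is precisely the mechanism that forces the weight $f^{2}/f'$ into the statement. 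The electrostatic term $\int_{0}^{R}4\pi f\,n\Gamma^{1/2}M^{-1}\phi_{r}\,dr$ is nonnegative, since $\phi_{r}\ge0$ by $(\ref{ee8})$ and $f\ge0$, so it may be discarded in a lower bound for $\dot H$.

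It then remains to absorb the pressure term $-\int_{0}^{R}fM^{-1}p_{r}\,dr$ and the relativistic defect $-\int_{0}^{R}fvM^{-1}\mathcal E\,dr$ into a single constant. Here I would use $\Gamma\ge1$ (hence $M\ge\rho$), $v^{2}<c^{2}$, the $\gamma$-law $(\ref{ee4})$, the thermodynamic identity $\mathrm dn/n=\mathrm d\rho/(p/c^{2}+\rho)$ coming from $(\ref{ee3})$ (which recasts $p_{r}/M$ as a $c^{-2}$-scaled logarithmic derivative), and crucially the hypothesis $p'(\rho)<ac^{2}$, to bound both terms by a multiple of $f$ and arrive at $\dot H(t)\ge\frac{1}{2B}H^{2}(t)-C\int_{0}^{R}f\,dr$ with the stated $C$. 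This estimate is the heart of the argument and its main obstacle: the many integrations by parts and pointwise bounds on the relativistic source terms must be carried through while keeping the constant explicit (in particular verifying the exact coefficient $\gamma+a-\gamma a+9$). It is exactly by controlling the pressure and the defect through $p'(\rho)<ac^{2}$ rather than through the magnitude of $|v|$ that the speed restriction $|v|\ge c/2$ of \cite{1} is removed.

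Finally, writing $A=H(0)$ and $D=\int_{0}^{R}f\,dr$, the hypothesis $A>\sqrt{2CBD}$ gives $\dot H(0)>0$; since the inequality $\dot H\ge\frac{1}{2B}H^{2}-CD$ then keeps $H$ increasing, we have $H(t)\ge A$ as long as the regular solution persists, whence
\[
\frac{d}{dt}\Big(\frac1H\Big)=-\frac{\dot H}{H^{2}}\le-\frac{1}{2B}\Big(1-\frac{2CBD}{H^{2}}\Big)\le-\frac{1}{2B}\cdot\frac{A^{2}-2CBD}{A^{2}}.
\]
Integrating forces $1/H$ to reach $0$ no later than $T=\frac{2AB}{A^{2}-2CBD}$, the asserted blowup time, and since $A^{2}>2CBD$ is equivalent to $T>0$ the two conditions of the statement are consistent; hence the regular solution cannot be continued past $T$.
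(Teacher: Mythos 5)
Your overall architecture coincides with the paper's: the functional $H(t)=\int_0^R f v\,dr$, the Cauchy--Schwarz step producing the weight $f^2/f'$, discarding the nonnegative electrostatic term via $(\ref{ee8})$, and the Riccati argument at the end are all exactly what the paper does, and that final ODE analysis is carried out correctly. The gap is the middle of the argument, which you yourself flag as ``the heart of the argument and its main obstacle'' and then do not supply. Two concrete problems arise there. First, your decomposition $M(v_t+vv_r)=-p_r+4\pi N\phi_r-v\,\mathcal{E}$ with $\mathcal{E}=N\,D_t(M/N)$ is algebraically correct but not usable as stated: $M/N=q\Gamma^{1/2}/n$ depends on $v$, so $\mathcal{E}$ contains $D_tv=v_t+vv_r$ implicitly, i.e.\ the unknown you are trying to isolate sits on both sides. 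One must first solve algebraically for $v_t$, which is precisely what the paper's Lemma does, producing the characteristic denominators $1-p'v^2/c^4$; the hypothesis $p'<ac^2$ is then used to bound these denominators below by $1-a$, which is where the factors $(1-a)$ and $(1-a)^2$ in $C$ come from. Your sketch never confronts this step.

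Second, the constant $C=\frac{c^2(\gamma+a-\gamma a+9)}{2(\gamma-1)(1-a)^2}$ cannot be obtained from $\Gamma\ge1$, $v^2<c^2$ and $p'<ac^2$ alone, which are the only ingredients you list. After the pressure term is recast as $\frac{\gamma c^2}{\gamma-1}\,\partial_r\!\left[\ln(1+\rho^{\gamma-1}/c^2)\right]$ (you correctly anticipate this) and integrated by parts against $f(r)\frac{(1-v^2/c^2)^2}{1-p'v^2/c^4}$, the surviving integrand contains the spatial derivatives $(v^2)_r$ and $(p')_r$; likewise the convective correction term requires a pointwise bound on $|vv_r|$. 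These are controlled only by conditions (2) and (3) in the definition of a regular solution, $|(v^2)_r|\le c^2$ and $|(p')_r|\le c^2$, together with the elementary bound $\ln(1+\rho^{\gamma-1}/c^2)<1/\gamma$ and the monotonicity of $f$ (to discard the term coming from $df$ with a definite sign). Your proposal never invokes the regularity conditions, so the key inequality $\dot H\ge\frac{1}{2B}H^2-C\int_0^R f\,dr$ with the stated $C$ is asserted rather than proved. As it stands the proposal is a correct plan whose decisive estimate is missing.
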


\section{Integration Method}

We consider the Cauchy problem of system $(\ref{e7})$ with the following
initial data:%
\begin{equation}
\left\{
\begin{matrix}
\rho(0,r)=\rho_{0}(r)\geq0\text{,}\\
v(0,r)=v_{0}(r)<c\text{,}\\
\text{supp}(\rho_{0},v_{0})\subseteq\{r:r\leq R\}\text{, \ \ \ \ \ for some
}R>0\text{.}%
\end{matrix}
\right.  \label{8}%
\end{equation}
Note that for system $(\ref{e7})$ to be well-defined, we must have%
\begin{equation}
|v|<c\text{,}%
\end{equation}
which is guaranteed by $(\ref{8})_{2}$.

First, we show that the sign of $\rho$ is determined by its initial value.

\begin{lemma}
\label{ll2}If $n_{0}$ is positive, then $n$ is always positive, where $n_{0}$
is defined by%
\begin{equation}
\rho_{0}=n_{0}\left(  1+e/c^{2}\right)  \text{.}%
\end{equation}

\end{lemma}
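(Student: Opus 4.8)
The plan is to exploit the continuity equation $(\ref{e7})_{1}$, which is really a transport equation for the lab-frame particle density $N := n/\sqrt{1-v^{2}/c^{2}}$, and to integrate it along characteristics so that the sign of $N$, and hence of $n$, is propagated from the initial data.

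First I would rewrite $(\ref{e7})_{1}$ in nonconservative form. Writing $N = n/\sqrt{1-v^{2}/c^{2}}$, the equation becomes $\partial_{t}N + \partial_{r}(Nv) + \frac{2}{r}Nv = 0$; expanding the flux term gives $\partial_{t}N + v\,\partial_{r}N = -N(\partial_{r}v + 2v/r)$, that is, $DN/Dt = -N(\partial_{r}v + 2v/r)$ with the material derivative $D/Dt := \partial_{t} + v\,\partial_{r}$.

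Next I would introduce the characteristic curves $r = r(t;r_{0})$ defined by $dr/dt = v(t,r)$, $r(0) = r_{0}$; since $v$ is $C^{1}$ for a regular solution, these curves exist and are unique. Along such a curve the transport equation reduces to the linear ODE $\frac{d}{dt}N(t,r(t)) = -N(\partial_{r}v + 2v/r)$, whose solution is $N(t,r(t)) = N(0,r_{0})\exp\left(-\int_{0}^{t}(\partial_{r}v + 2v/r)(s,r(s))\,ds\right)$. The exponential factor is strictly positive, so $N(t,r(t))$ carries the sign of $N(0,r_{0}) = n_{0}(r_{0})/\sqrt{1-v_{0}^{2}(r_{0})/c^{2}}$. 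Because $|v_{0}| < c$ by $(\ref{8})_{2}$, the square root is real and positive, so $N(0,r_{0}) > 0$ whenever $n_{0}(r_{0}) > 0$; hence $N > 0$ along every characteristic, and since $n = N\sqrt{1-v^{2}/c^{2}}$ with $|v| < c$ keeping the square root positive, we conclude $n > 0$.

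The step I expect to be the main obstacle is controlling the apparent singularity of the coefficient $2v/r$ at $r = 0$, which must be integrable for the exponential to make sense. This is resolved by observing that a spherically symmetric regular solution satisfies $v(t,0) = 0$ with $v$ of class $C^{1}$, so $v/r \to \partial_{r}v(t,0)$ as $r \to 0$ and the integrand $\partial_{r}v + 2v/r$ stays bounded on the interval of existence; the exponential is therefore finite and strictly positive throughout, which completes the sign-propagation argument.
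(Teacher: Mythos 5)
Your proposal is correct and follows essentially the same route as the paper: both rewrite the continuity equation $(\ref{e7})_{1}$ as a transport equation for $n/\sqrt{1-v^{2}/c^{2}}$, integrate along the characteristics $\dot r = v$, and read off the sign from the strictly positive exponential factor. Your extra remark about the integrability of $2v/r$ near $r=0$ is a sensible refinement that the paper leaves implicit, but it does not change the argument.
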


\begin{proof}
Let $r(t;r_{0})$ be a characteristic curve starting at $r_{0}$. That is,
$r(t;r_{0})$ satisfies the following ordinary differential equation:%
\begin{equation}
\left\{
\begin{matrix}
\frac{\mathrm{d}}{\mathrm{d}t}r(t;r_{0})=v(t,r(t;r_{0}))\text{,}\\
r(0;r_{0})=r_{0}\text{.}%
\end{matrix}
\right.  \label{e11}%
\end{equation}
Then, for any $C^{1}$ function $F(t,r)$, by the chain rule, we have%
\begin{equation}
\frac{\mathrm{d}}{\mathrm{d}t}F(t,r(t;r_{0}))=F_{t}(t,r(t;r_{0}%
))+v(t,r(t;r_{0}))F_{r}(t,r(t;r_{0}))\text{.} \label{e12}%
\end{equation}
Take%
\begin{equation}
F=\frac{n}{\sqrt{1-v^{2}/c^{2}}}\text{,}%
\end{equation}
then $(\ref{e7})_{1}$ becomes%
\begin{equation}
\frac{\mathrm{d}}{\mathrm{d}t}F+\left(  v_{r}+2v/r\right)  F=0\text{,}%
\end{equation}
where we omit the substitution $r=r(t;r_{0})$. The above ordinary differential
equation can be solved using an integral factor and the solution is%
\begin{equation}
F(t,r(t;r_{0}))=F(0,r_{0})\exp\left(  -\int_{0}^{t}\left(  v_{r}+2v/r\right)
(s,r(s;r_{0}))ds\right)  \text{.} \label{e15}%
\end{equation}
Thus,%
\begin{equation}
n_{0}>0\Rightarrow F(0,r_{0})>0\Rightarrow F(t,r(t;r_{0}))>0\Rightarrow
n(t,r(t;r_{0}))>0\text{.}%
\end{equation}
The proof is complete.
\end{proof}

\begin{remark}
By $(\ref{ee2})$, $\rho$ and $n$ have the same sign. Thus, it follows from the
initial data $(\ref{8})$ that $\rho$ is always non-negative.
\end{remark}

\begin{lemma}
\label{ll2}For any $C^{1}$ non-vacuum ($\rho\neq0$) solutions of system
$(\ref{e7})$ with the condition%
\begin{equation}
{p}^{\prime}(\rho)<c^{2}\text{,}%
\end{equation}
we have the following relation:%
\begin{equation}
\displaystyle v_{t}+\frac{1-{p}^{\prime}/c^{2}}{1-{p}^{\prime}v^{2}/c^{4}%
}vv_{r}+\frac{(1-v^{2}/c^{2})^{2}{p}^{\prime}}{q(1-{p}^{\prime}v^{2}/c^{4}%
)}\rho_{r}=\frac{4\pi n(\sqrt{1-v^{2}/c^{2}})^{3}}{q(1-{p}^{\prime}v^{2}%
/c^{4})}\phi_{r}+\frac{2(1-v^{2}/c^{2})v^{2}{p}^{\prime}/c^{2}}{r(1-{p}%
^{\prime}v^{2}/c^{4})}\text{,} \label{ee18}%
\end{equation}
where%
\begin{equation}
q:=p/c^{2}+\rho\label{ee19}%
\end{equation}
and%
\begin{equation}
{p}^{\prime}:={p}^{\prime}(\rho)\text{, \ \ \ \ is\ the derivative of }p\text{
with respect to }\rho\text{.}%
\end{equation}

\end{lemma}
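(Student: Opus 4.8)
The plan is to reduce the two evolution equations $(\ref{e7})_{1}$ and $(\ref{e7})_{2}$, which are written in conservation form, to a single non-conservative transport equation for $v$ by eliminating the time derivative of the density.

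First I would introduce the Lorentz factor $L:=(1-v^{2}/c^{2})^{-1/2}$, so that the conserved densities read $nL$ in the continuity equation and $qL^{2}v$ in the momentum equation, with $q=p/c^{2}+\rho$ as in $(\ref{ee19})$. A direct computation gives $L_{t}=(vv_{t}/c^{2})L^{3}$ and $L_{r}=(vv_{r}/c^{2})L^{3}$, which I use to expand the conservation-form derivatives by the product rule. I would also record the thermodynamic identity coming from $(\ref{ee3})$: since $n$ is a function of $\rho$ with $\mathrm{d}n/n=\mathrm{d}\rho/q$, the chain rule yields $n_{t}=(n/q)\rho_{t}$ and $n_{r}=(n/q)\rho_{r}$ along the solution. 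This identity is what couples the continuity equation, written in $n$, to the momentum equation, written in $\rho$ through $p$ and $q$.

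Next I would expand $(\ref{e7})_{1}$ using the product rule together with the formulas for $L_{t}$ and $L_{r}$, substitute $n_{t}=(n/q)\rho_{t}$ and $n_{r}=(n/q)\rho_{r}$, and divide by $nL$ to obtain a non-conservative relation among $\rho_{t},\rho_{r},v_{t},v_{r}$ and the geometric term $2v/r$. Solving this relation for $\rho_{t}$ expresses $\rho_{t}$ as an explicit combination of $v_{t},\rho_{r},v_{r}$ and $2qv/r$. In parallel I would expand $(\ref{e7})_{2}$ the same way, writing $q_{t}=(p^{\prime}/c^{2}+1)\rho_{t}$, $q_{r}=(p^{\prime}/c^{2}+1)\rho_{r}$ and $p_{r}=p^{\prime}\rho_{r}$, so that the momentum equation becomes a linear combination of $\rho_{t},\rho_{r},v_{t},v_{r}$, the source $4\pi nL\phi_{r}$ and the geometric term.

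The decisive step is to substitute the expression for $\rho_{t}$ coming from the continuity equation into the expanded momentum equation, which eliminates $\rho_{t}$ entirely, and then to collect the coefficients of $v_{t}$, of $vv_{r}$ and of $\rho_{r}$. Here the key algebraic simplification is the identity $1+(v^{2}/c^{2})L^{2}=L^{2}$ (equivalently $L^{-2}=1-v^{2}/c^{2}$), which collapses the coefficient of $v_{t}$ to $qL^{4}(1-p^{\prime}v^{2}/c^{4})$, the coefficient of $vv_{r}$ to $(1-p^{\prime}/c^{2})qL^{4}$, and the coefficient of $\rho_{r}$ to exactly $p^{\prime}$; the same identity turns the source $4\pi nL\phi_{r}$ and the geometric contribution into the two terms on the right-hand side of $(\ref{ee18})$. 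Dividing through by the coefficient $qL^{4}(1-p^{\prime}v^{2}/c^{4})$ of $v_{t}$ then yields $(\ref{ee18})$, after using $L^{-2}=1-v^{2}/c^{2}$ and $L^{-3}=(\sqrt{1-v^{2}/c^{2}})^{3}$ to match each factor. This division is legitimate precisely because the solution is non-vacuum, so $q>0$, and because $p^{\prime}(\rho)<c^{2}$ together with $v^{2}<c^{2}$ forces $p^{\prime}v^{2}/c^{4}<1$, whence $1-p^{\prime}v^{2}/c^{4}>0$. I expect the main obstacle to be the bookkeeping in this coefficient collection, namely keeping track of the several $L^{2}$- and $L^{4}$-weighted contributions produced by the product-rule expansion of both $qL^{2}v$ and $q$, rather than any conceptual difficulty.
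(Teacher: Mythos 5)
Your proposal is correct and follows essentially the same route as the paper: expand both conservation-form equations, use the relation $\mathrm{d}n/n=\mathrm{d}\rho/q$ from $(\ref{ee3})$ to convert $n$-derivatives into $\rho$-derivatives, solve the continuity equation for the material derivative of $\rho$, substitute into the expanded momentum equation, and divide by the coefficient $q(1-p'v^{2}/c^{4})/(1-v^{2}/c^{2})^{2}$ of $v_{t}$, which is nonzero exactly for the reasons you give. The Lorentz-factor bookkeeping is only a notational repackaging of the paper's explicit $(1-v^{2}/c^{2})$ powers; the coefficients you predict for $v_{t}$, $vv_{r}$ and $\rho_{r}$ match the paper's intermediate identity precisely.
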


\begin{proof}
From $(\ref{ee3})$, we have%
\begin{equation}
\frac{\mathrm{d}n}{n}=\frac{\mathrm{d}\rho}{q}\text{.} \label{e3}%
\end{equation}
From $(\ref{e7})_{1}$, we have%
\begin{equation}%
\begin{array}
[c]{c}%
\displaystyle\frac{n}{q\sqrt{1-v^{2}/c^{2}}}\rho_{t}+\frac{n}{c^{2}\left(
\sqrt{1-v^{2}/c^{2}}\right)  ^{3}}vv_{t}+\frac{n}{q\sqrt{1-v^{2}/c^{2}}}%
v\rho_{r}+\frac{n}{\left(  \sqrt{1-v^{2}/c^{2}}\right)  ^{3}}v_{r}\\
\displaystyle+\frac{2n}{r\sqrt{1-v^{2}/c^{2}}}v=0
\end{array}
\end{equation}
or%
\begin{equation}
\displaystyle\rho_{t}+v\rho_{r}=-\frac{q}{c^{2}(1-v^{2}/c^{2})}vv_{t}-\frac
{q}{1-v^{2}/c^{2}}v_{r}-\frac{2qv}{r}\text{. \ \ \ \ \ (For }n\neq0\text{ and
}q\neq0\text{)} \label{e23}%
\end{equation}

On the other hand, from $(\ref{e7})_{2}$, we have%
\begin{equation}
\displaystyle\frac{1+{p}^{\prime}/c^{2}}{1-v^{2}/c^{2}}(\rho_{t}+v\rho
_{r})v+\frac{q(1+v^{2}/c^{2})}{(1-v^{2}/c^{2})^{2}}v_{t}+\frac{2q}%
{(1-v^{2}/c^{2})^{2}}vv_{r}+{p}^{\prime}\rho_{r}+\frac{2q}{r(1-v^{2}/c^{2}%
)}v^{2}=\frac{4\pi n}{\sqrt{1-v^{2}/c^{2}}}\phi_{r}\text{.} \label{eq241}%
\end{equation}
Substituting $(\ref{e23})$ into $(\ref{eq241})$, we have%
\begin{equation}
\displaystyle\frac{(1-{p}^{\prime}v^{2}/c^{4})q}{(1-v^{2}/c^{2})^{2}}%
v_{t}+\frac{(1-{p}^{\prime}/c^{2})q}{(1-v^{2}/c^{2})^{2}}vv_{r}-\frac
{2q{p}^{\prime}/c^{2}}{r(1-v^{2}/c^{2})}v^{2}+{p}^{\prime}\rho_{r}=\frac{4\pi
n}{\sqrt{1-v^{2}/c^{2}}}\phi_{r}%
\end{equation}
or (for $q\neq0$),%
\begin{equation}
\displaystyle v_{t}+\frac{1-{p}^{\prime}/c^{2}}{1-{p}^{\prime}v^{2}/c^{4}%
}vv_{r}+\frac{(1-v^{2}/c^{2})^{2}{p}^{\prime}}{q(1-{p}^{\prime}v^{2}/c^{4}%
)}\rho_{r}=\frac{4\pi n(\sqrt{1-v^{2}/c^{2}})^{3}}{q(1-{p}^{\prime}v^{2}%
/c^{4})}\phi_{r}+\frac{2(1-v^{2}/c^{2})v^{2}{p}^{\prime}/c^{2}}{r(1-{p}%
^{\prime}v^{2}/c^{4})}\text{.}%
\end{equation}
The proof is complete.
\end{proof}

Following Makino, Ukai and Kawashima \cite{3}, Geng in \cite{1} introduced a
corresponding notion of regular solutions for the relativistic Euler-Poisson
system, defined as follows.

\begin{definition}
A solution $(\rho,v)$ of system $(\ref{e7})$ is regular if the following
conditions are satisfied:\newline(1)%
\begin{equation}
\displaystyle\left(  p^{\frac{\gamma-1}{2\gamma}},v\right)  \in C^{1}\text{.}%
\end{equation}
(2)%
\begin{equation}
|(v^{2})_{r}|\leq c^{2}\text{.} \label{ee49}%
\end{equation}
(3)%
\begin{equation}
|({p}^{\prime})_{r}|\leq c^{2}\text{.} \label{ee50}%
\end{equation}

\end{definition}

\begin{remark}
Condition ($1$) in the above definition implies that a regular solution is
$C^{1}$.
\end{remark}

In $\cite{1}$, Geng showed the non-expanding support of the regular solutions
of system $(\ref{e7})$. For the sake of completeness, we include a proof here.

\begin{proposition}
The supports for the regular solutions of system $(\ref{e7})$ with initial
conditions $(\ref{8})$ do not expand. That is to say,%
\[
\text{supp}(\rho,v)\subseteq\{r:r\leq R\}\text{ \ \ \ \ \ for all }t\text{.}%
\]

\end{proposition}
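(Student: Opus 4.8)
The plan is to propagate the vacuum state along characteristics. Fix $r_{0}>R$. By the initial condition $(\ref{8})_{3}$ we have $\rho_{0}(r_{0})=0$, hence $n_{0}(r_{0})=0$ and $v_{0}(r_{0})=0$. I would first recall the characteristic curve $r(t;r_{0})$ defined by $(\ref{e11})$ together with the transport identity $(\ref{e15})$ for $F=n/\sqrt{1-v^{2}/c^{2}}$, which was derived from the continuity equation $(\ref{e7})_{1}$. Since $F(0,r_{0})=0$, identity $(\ref{e15})$ forces $F(t,r(t;r_{0}))=0$ for every $t$ in the existence interval, whence $n\equiv0$ and, by $(\ref{ee2})$, $\rho\equiv0$ along this characteristic. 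This settles the density on every characteristic issuing from outside the ball of radius $R$.

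The second and decisive step is to show that the velocity also vanishes along these characteristics, so that they reduce to the vertical lines $r\equiv r_{0}$ and the support boundary cannot drift outward. On the characteristic under consideration $\rho=0$, hence $p=p^{\prime}=0$ and $n=0$, so I would substitute these values into the velocity relation $(\ref{ee18})$ of Lemma \ref{ll2}. Every term on the right-hand side, and the $\rho_{r}$-term on the left, carries an explicit factor of $n$ or of $p^{\prime}$; formally discarding them collapses $(\ref{ee18})$ to the inviscid relation $v_{t}+vv_{r}=0$, i.e. $\frac{\mathrm{d}}{\mathrm{d}t}v(t,r(t;r_{0}))=0$ along the characteristic. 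Together with $v_{0}(r_{0})=0$ this gives $v(t,r(t;r_{0}))=0$, so $\frac{\mathrm{d}}{\mathrm{d}t}r(t;r_{0})=0$ and $r(t;r_{0})\equiv r_{0}$. Because the characteristics issuing from $\{r_{0}\le R\}$ cannot cross those issuing from $\{r_{0}>R\}$ while the solution remains $C^{1}$, the latter family foliates the exterior region and keeps it in the state $(\rho,v)=(0,0)$, yielding $\mathrm{supp}(\rho,v)\subseteq\{r\le R\}$ for all $t$.

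The step I expect to be the main obstacle is precisely the collapse of $(\ref{ee18})$ to $v_{t}+vv_{r}=0$ at the vacuum boundary, because the coefficients $p^{\prime}/q$ and $n/q$ are of the indeterminate form $0/0$ there rather than plainly zero: from $(\ref{e3})$ one has $n/q=\mathrm{d}n/\mathrm{d}\rho\to\mathrm{const}$ as $\rho\to0$, while the Poisson field $\phi_{r}$ from $(\ref{ee8})$ need not vanish in the exterior, so the product $n\phi_{r}/q$ must be argued to drop out and not merely assumed to. I would resolve this by leaving the conservative momentum law $(\ref{e7})_{2}$ undivided on the exact vacuum set, where its forcing $4\pi n\phi_{r}/\sqrt{1-v^{2}/c^{2}}$ is identically zero, and by passing to the Makino--Ukai--Kawashima regular variables $\left(p^{(\gamma-1)/(2\gamma)},v\right)=\left(\rho^{(\gamma-1)/2},v\right)$ of the Definition, in which the velocity equation is already in non-conservative form and stays well defined across $\rho=0$. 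The regularity bounds $(\ref{ee49})$--$(\ref{ee50})$ keep all coefficients bounded up to the boundary, so the transport identity $v_{t}+vv_{r}=0$ holds rigorously in the exterior and the uniqueness of the characteristic ODE $(\ref{e11})$ delivers $v\equiv0$ there, closing the argument.
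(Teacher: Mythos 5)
Your argument is essentially the paper's own proof: vacuum is propagated along the characteristics $(\ref{e11})$ via the transport identity $(\ref{e15})$, and then the velocity relation $(\ref{ee18})$ is reduced to $v_{t}+vv_{r}=0$ on the exterior characteristics, forcing $v\equiv 0$ and $r(t;r_{0})\equiv r_{0}$ there. The paper even carries out the passage to $w=p^{\frac{\gamma-1}{2\gamma}}$ that you invoke, computing $\frac{p'}{q}\rho_{r}=\frac{2\gamma/(\gamma-1)}{w^{2}/c^{2}+1}ww_{r}\to 0$ and $\frac{n}{q}\to\frac{1}{1+e/c^{2}}$ as $\rho\to 0$. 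The obstacle you single out is, however, genuine, and it is not resolved by the paper either: since $\frac{n}{q}\to\frac{1}{1+e/c^{2}}\neq 0$ while $\phi_{r}(r)=\frac{4\pi}{r^{2}}\int_{0}^{R}\frac{n}{\sqrt{1-v^{2}/c^{2}}}\,s^{2}ds$ is generically nonzero for $r>R$, the term $\frac{4\pi n(\sqrt{1-v^{2}/c^{2}})^{3}}{q(1-p'v^{2}/c^{4})}\phi_{r}$ in $(\ref{ee18})$ does not drop out at vacuum; the paper's parenthetical remark that $p'=0$ when $\rho=0$ kills the $\rho_{r}$ term and the final term of $(\ref{ee18})$, but not this one. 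Your proposed repair does not close the gap either: on the exact vacuum set the undivided momentum law $(\ref{e7})_{2}$ degenerates to $0=0$ (every term carries a factor of $q$ or $n$) and so carries no information about $v$, while in the regular variables the forcing reappears with the nonvanishing coefficient $n/q$. So you have faithfully reproduced the paper's route and correctly located its weak joint, but the patch you sketch leaves open exactly the point that the paper itself passes over in silence.
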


\begin{proof}
As in the proof of Lemma $\ref{ll2}$, let $r(t;r_{0})$ be a characteristic
curve starting at $r_{0}$. Then, when $r_{0}\geq R$, $\rho_{0}(r_{0}%
)=n_{0}(r_{0})=0$. From $(\ref{e15})$,%
\begin{equation}
n(t,r(t;r_{0}))=\rho(t,r(t;r_{0}))=0\text{.}%
\end{equation}
On the other hand, note that for $\rho\neq0$,%
\begin{equation}
\frac{{p}^{\prime}}{q}\rho_{r}=\frac{\gamma\rho^{\gamma-2}\rho_{r}}%
{\rho^{\gamma-1}/c^{2}+1}=\frac{2\gamma/(\gamma-1)}{w^{2}/c^{2}+1}ww_{r}%
\end{equation}
and%
\begin{equation}
\frac{n}{q}=\frac{1}{(1+e/c^{2})\left(  \rho^{\gamma-1}/c^{2}+1\right)
}\text{,}%
\end{equation}
where%
\begin{equation}
w:=p^{\frac{\gamma-1}{2\gamma}}%
\end{equation}
is $C^{1}$. Thus, by continuity, when $\rho=0$, $w=0$, and hence%
\begin{equation}
\frac{{p}^{\prime}}{q}\rho_{r}=0\text{.}%
\end{equation}
\newline Similarly, when $\rho=0$, by continuity we have%
\begin{equation}
\frac{n}{q}=\frac{1}{1+e/c^{2}}\text{.}%
\end{equation}
Thus, when $r_{0}\geq R$, after substitution, $(\ref{ee18})$ becomes%
\begin{equation}
v_{t}+vv_{r}=0\text{. \ \ \ \ \ \ (Note that }{p}^{\prime}=0\text{ when }%
\rho=0\text{.)} \label{ee36}%
\end{equation}
From $(\ref{e12})$ ($F=v$), we have%
\begin{equation}
\frac{\mathrm{d}}{\mathrm{d}t}v(t,r(t;r_{0}))=0
\end{equation}
or%
\begin{equation}
v(t,r(t;r_{0}))=v_{0}(r_{0})=0\text{.}%
\end{equation}
From $(\ref{e11})$, we have%
\begin{equation}
\displaystyle\frac{\mathrm{d}}{\mathrm{d}t}r(t;r_{0})=v(t,r(t;r_{0}))=0\text{,
\ \ \ \ \ when }r_{0}\geq R\text{.}%
\end{equation}
Thus,%
\begin{equation}
r(t;r_{0})=r(0;r_{0})=r_{0}\text{, \ \ \ \ \ when }r_{0}\geq R.
\end{equation}
Therefore, we have%
\begin{equation}
\rho(t,r_{0})=\rho(t,r(t;r_{0}))=0
\end{equation}
and%
\begin{equation}
v(t,r_{0})=v(t,r(t;r_{0}))=0
\end{equation}
when $r_{0}\geq R$.\newline As $r_{0}\geq R$ is arbitrary, the proof is complete.
\end{proof}

Now, we are ready to present the proof of the theorem.

\begin{proof}
[Proof of Thoerem $\ref{THM1}$]By Lemma $\ref{ll2}$ and $(\ref{ee8})$, we have%
\begin{equation}
\displaystyle v_{t}+\frac{1-{p}^{\prime}/c^{2}}{1-{p}^{\prime}v^{2}/c^{4}%
}vv_{r}+\frac{(1-v^{2}/c^{2})^{2}{p}^{\prime}}{q(1-{p}^{\prime}v^{2}/c^{4}%
)}\rho_{r}=\frac{4\pi n(\sqrt{1-v^{2}/c^{2}})^{3}}{q(1-{p}^{\prime}v^{2}%
/c^{4})}\phi_{r}+\frac{2(1-v^{2}/c^{2})v^{2}{p}^{\prime}/c^{2}}{r(1-{p}%
^{\prime}v^{2}/c^{4})}\geq0\text{.}%
\end{equation}
Thus,%
\begin{equation}
v_{t}+vv_{r}-\frac{{p}^{\prime}/c^{2}(1-v^{2}/c^{2})}{1-{p}^{\prime}%
v^{2}/c^{4}}vv_{r}+\frac{(1-v^{2}/c^{2})^{2}{p}^{\prime}}{q(1-{p}^{\prime
}v^{2}/c^{4})}\rho_{r}\geq0\text{.} \label{6}%
\end{equation}

Now multiply $f(r)$ and take integration over $[0,R]$ with respect to $r$ on
both sides of $(\ref{6})$ to obtain%
\begin{equation}
\int_{0}^{R}f(r)v_{t}dr+\int_{0}^{R}f(r)vv_{r}dr+\int_{0}^{R}f(r)\frac
{{p}^{\prime}/c^{2}(1-v^{2}/c^{2})}{1-{p}^{\prime}v^{2}/c^{4}}(-vv_{r}%
)dr+\int_{0}^{R}f(r)\frac{(1-v^{2}/c^{2})^{2}{p}^{\prime}}{q(1-{p}^{\prime
}v^{2}/c^{4})}\rho_{r}dr\geq0\text{,} \label{eee21}%
\end{equation}
where $R$ is given by the initial data $(\ref{8})$.

First, the integrand of the third term in expression $(\ref{eee21})$ is less
than or equal to%
\begin{equation}
f(r)\frac{{p}^{\prime}/c^{2}(1-v^{2}/c^{2})}{1-{p}^{\prime}v^{2}/c^{4}}%
|vv_{r}|\leq f(r)\frac{1\cdot1}{1-a}|vv_{r}|\leq f(r)\frac{c^{2}}%
{2(1-a)}\text{ \ \ \ \ \ (by }(\ref{ee49})\text{).}%
\end{equation}
Thus, expression $(\ref{eee21})$ becomes%
\begin{equation}
\int_{0}^{R}f(r)v_{t}dr+\int_{0}^{R}f(r)vv_{r}dr+\frac{c^{2}}{2(1-a)}\int
_{0}^{R}f(r)dr+\int_{0}^{R}f(r)\frac{(1-v^{2}/c^{2})^{2}{p}^{\prime}}%
{q(1-{p}^{\prime}v^{2}/c^{4})}\rho_{r}dr\geq0\text{.} \label{eee22}%
\end{equation}

Second, denote the fourth term in expression $(\ref{eee22})$ to be $I$.%
\begin{align}
I  &  =\int_{0}^{R}\frac{f(r)(1-v^{2}/c^{2})^{2}\gamma\rho^{\gamma-1}}{\left(
\rho^{\gamma}/c^{2}+\rho\right)  (1-{p}^{\prime}v^{2}/c^{4})}d\rho\text{
\ \ \ \ \ (by }(\ref{ee4})\text{ and }(\ref{ee19})\text{)}\\
&  =\int_{0}^{R}\frac{f(r)(1-v^{2}/c^{2})^{2}}{(1-{p}^{\prime}v^{2}/c^{4}%
)}\left(  \frac{\gamma\rho^{\gamma-1}}{\rho^{\gamma}/c^{2}+\rho}\right)
d\rho\\
&  =\frac{\gamma c^{2}}{\gamma-1}\int_{0}^{R}\frac{f(r)(1-v^{2}/c^{2})^{2}%
}{1-{p}^{\prime}v^{2}/c^{4}}d\left[  \ln\left(  1+\rho^{\gamma-1}%
/c^{2}\right)  \right] \\
&  =-\frac{\gamma c^{2}}{\gamma-1}\int_{0}^{R}\ln\left(  1+\rho^{\gamma
-1}/c^{2}\right)  d\left[  \frac{f(r)(1-v^{2}/c^{2})^{2}}{1-{p}^{\prime}%
v^{2}/c^{4}}\right]  \text{ \ \ \ \ \ (by integration by parts)}\\
&  =-\frac{\gamma c^{2}}{\gamma-1}\int_{0}^{R}\ln\left(  1+\rho^{\gamma
-1}/c^{2}\right)  \left[  \frac{(1-v^{2}/c^{2})^{2}}{1-{p}^{\prime}v^{2}%
/c^{4}}\right]  df(r)\\
&  -\frac{\gamma c^{2}}{\gamma-1}\int_{0}^{R}\ln\left(  1+\rho^{\gamma
-1}/c^{2}\right)  f(r)d\left[  \frac{(1-v^{2}/c^{2})^{2}}{1-{p}^{\prime}%
v^{2}/c^{4}}\right] \nonumber\\
&  \leq-\frac{\gamma c^{2}}{\gamma-1}\int_{0}^{R}\ln\left(  1+\rho^{\gamma
-1}/c^{2}\right)  f(r)d\left[  \frac{(1-v^{2}/c^{2})^{2}}{1-{p}^{\prime}%
v^{2}/c^{4}}\right] \\
&  =-\frac{\gamma c^{2}}{\gamma-1}\int_{0}^{R}\ln\left(  1+\rho^{\gamma
-1}/c^{2}\right)  f(r)\left(  \frac{2(1-v^{2}/c^{2})\left(  -(v^{2})_{r}%
/c^{2}\right)  }{1-{p}^{\prime}v^{2}/c^{4}}+(1-v^{2}/c^{2})^{2}\frac
{d({p}^{\prime}v^{2}/c^{4})}{\left(  1-{p}^{\prime}v^{2}/c^{4}\right)  ^{2}%
}\right)  dr\\
&  =-\frac{\gamma}{\gamma-1}\int_{0}^{R}\ln\left(  1+\rho^{\gamma-1}%
/c^{2}\right)  f(r)\left(  \frac{2(1-v^{2}/c^{2})\left(  -(v^{2})_{r}\right)
}{1-{p}^{\prime}v^{2}/c^{4}}+(1-v^{2}/c^{2})^{2}\frac{\left[  ({p}^{\prime
})_{r}v^{2}+({p}^{\prime})(v^{2})_{r}\right]  /c^{2}}{\left(  1-{p}^{\prime
}v^{2}/c^{4}\right)  ^{2}}\right)  dr\\
&  =-\frac{\gamma}{\gamma-1}\int_{0}^{R}f(r)\ln\left(  1+\rho^{\gamma-1}%
/c^{2}\right)  \frac{(1-v^{2}/c^{2})^{2}v^{2}/c^{2}}{(1-{p}^{\prime}%
v^{2}/c^{4})^{2}}({p}^{\prime})_{r}dr\\
&  -\frac{\gamma}{\gamma-1}\int_{0}^{R}f(r)\ln\left(  1+\rho^{\gamma-1}%
/c^{2}\right)  \frac{(1-v^{2}/c^{2})(-2+{p}^{\prime}v^{2}/c^{4}+{p}^{\prime
}/c^{2})}{(1-{p}^{\prime}v^{2}/c^{4})^{2}}(v^{2})_{r}dr\text{.}\nonumber
\end{align}
\newline By the elementary inequality%
\begin{equation}
\ln(1+x)\leq x\text{, \ \ \ \ for }x\geq0\text{,}%
\end{equation}
we have%
\begin{equation}
\ln(1+\rho^{\gamma-1}/c^{2})\leq\rho^{\gamma-1}/c^{2}=\frac{{p}^{\prime}%
}{\gamma c^{2}}<\frac{1}{\gamma}\text{.}%
\end{equation}
Thus,%
\begin{align}
I  &  \leq\frac{\gamma}{\gamma-1}\int_{0}^{R}f(r)\ln\left(  1+\rho^{\gamma
-1}/c^{2}\right)  \frac{(1-v^{2}/c^{2})^{2}v^{2}/c^{2}}{(1-{p}^{\prime}%
v^{2}/c^{4})^{2}}|({p}^{\prime})_{r}|dr\\
&  +\frac{\gamma}{\gamma-1}\int_{0}^{R}f(r)\ln\left(  1+\rho^{\gamma-1}%
/c^{2}\right)  \frac{(1-v^{2}/c^{2})(2+{p}^{\prime}v^{2}/c^{4}+{p}^{\prime
}/c^{2})}{(1-{p}^{\prime}v^{2}/c^{4})^{2}}|(v^{2})_{r}|dr\nonumber\\
&  \leq\frac{c^{2}}{(\gamma-1)(1-a)^{2}}\int_{0}^{R}f(r)dr+\frac{4c^{2}%
}{(\gamma-1)(1-a)^{2}}\int_{0}^{R}f(r)dr\text{ \ \ \ \ \ \ (by }%
(\ref{ee50})\text{ and }(\ref{ee49})\text{)}\\
&  =\frac{5c^{2}}{(\gamma-1)(1-a)^{2}}\int_{0}^{R}f(r)dr\text{.}%
\end{align}
It follows that expression $(\ref{eee22})$ becomes%
\begin{equation}
\int_{0}^{R}f(r)v_{t}dr+\int_{0}^{R}f(r)vv_{r}dr+C\int_{0}^{R}f(r)dr\geq
0\text{,}%
\end{equation}
where%
\begin{equation}
C:=\frac{c^{2}}{2(1-a)}+\frac{5c^{2}}{(\gamma-1)(1-a)^{2}}=\frac{c^{2}%
(\gamma+a-\gamma a+9)}{2(\gamma-1)(1-a)^{2}}>0\text{.}%
\end{equation}

Third, by integration by parts, we have%
\begin{equation}
\int_{0}^{R}f(r)v_{t}dr-\frac{1}{2}\int_{0}^{R}v^{2}{f}^{\prime}%
(r)dr+C\int_{0}^{R}f(r)dr\geq0\text{.} \label{24}%
\end{equation}
Let%
\begin{equation}
H(t):=\int_{0}^{R}f(r)v(t,r)dr\text{.}%
\end{equation}
Then,%
\begin{equation}
{H}^{\prime}(t)=\int_{0}^{R}f(r)v_{t}dr\text{,} \label{58}%
\end{equation}
and, by the integral version of the Cauchy inequality, we have%
\begin{equation}
H^{2}(t)=\left(  \int_{0}^{R}fvdr\right)  ^{2}=\left(  \int_{0}^{R}\frac
{f}{{f}^{\prime}}vdf\right)  ^{2}\leq\left(  \int_{0}^{R}\frac{f^{2}}%
{{f}^{\prime2}}df\right)  \left(  \int_{0}^{R}v^{2}df\right)
\end{equation}
or%
\begin{equation}
H^{2}(t)\leq\left(  \int_{0}^{R}\frac{f^{2}(r)}{{f}^{\prime}(r)}dr\right)
\left(  \int_{0}^{R}{f}^{\prime}(r)v^{2}dr\right)  \text{.} \label{59}%
\end{equation}
From $(\ref{58})$ and $(\ref{59})$, $(\ref{24})$ becomes%
\begin{equation}
\displaystyle{H}^{\prime}(t)-\frac{H^{2}(t)}{2\int_{0}^{R}\frac{f^{2}(r)}%
{{f}^{\prime}(r)}dr}+C\int_{0}^{R}f(r)dr\geq0
\end{equation}
or%
\begin{equation}
{H}^{\prime}(t)\geq\frac{H^{2}(t)}{2B_{1}}-B_{2}\text{,} \label{61}%
\end{equation}
where%
\begin{equation}
B_{1}:=\int_{0}^{R}\frac{f^{2}(r)}{{f}^{\prime}(r)}dr>0
\end{equation}
and%
\begin{equation}
B_{2}=C\int_{0}^{R}f(r)dr>0\text{.}%
\end{equation}
From $(\ref{61})$, it is well-known that if%
\begin{equation}
H(0)>\sqrt{2B_{1}B_{2}}\text{,} \label{eee28}%
\end{equation}
then the solutions blow up on or before the finite time%
\begin{equation}
T:=\frac{2B_{1}H(0)}{H^{2}(0)-2B_{1}B_{2}}>0. \label{eee29}%
\end{equation}
More precisely, from $(\ref{eee28})$ and the continuity of $H(t)$,
${H}^{\prime}(t)>0$ for all $t\geq0$. Moreover, from $(\ref{61})$,%
\begin{align}
{H}^{\prime}(t)  &  \geq\frac{H^{2}(t)}{2B_{1}}-B_{2}\\
&  =\frac{1}{2B_{1}}\left(  1-\frac{2B_{1}B_{2}}{H^{2}(0)}\right)
H^{2}(t)+B_{2}\left(  \frac{H^{2}(t)}{H^{2}(0)}-1\right) \\
&  \geq\frac{1}{2B_{1}}\left(  1-\frac{2B_{1}B_{2}}{H^{2}(0)}\right)
H^{2}(t)\\
&  =\frac{H^{2}(0)-2B_{1}B_{2}}{2B_{1}H^{2}(0)}H^{2}(t)\text{.}%
\end{align}
It follows that%
\begin{equation}
H(t)\geq\left(  \frac{1}{H(0)}-\frac{H^{2}(0)-2B_{1}B_{2}}{2B_{1}H^{2}%
(0)}t\right)  ^{-1}\text{.}%
\end{equation}
Thus, the blowup time $T$ in $(\ref{eee29})$ is obtained.

The proof is complete.
\end{proof}

\section{Acknowledgement}

This research was partially supported by the Dean's Research Grant FLASS/ECR-9
from the Hong Kong Institute of Education.

\end{document}